\def\id{\mathop{\rm id}}
\def\N{\mathbb{N}}
\def\Orb{\mathop{\rm Orb}}
\theoremstyle{plain}
\newtheorem{thm}{Theorem}
\newtheorem{exa}[thm]{Example}
\theoremstyle{remark}
\title{Topological Transitivity of Nonautonomous Dynamical Systems}
\author{Michal~M\'{a}lek
}
\email{Michal.Malek@math.slu.cz}
\address{Mathematical Institute in Opava, Silesian University in Opava, Na
Rybn\'{\i}\v{c}ku~1, 746 01 Opava, Czech Republic}
\date{December 28, 2024}
\keywords{Nonautonomous dynamical system;
Transitivity.}
\subjclass[2010]{Primary 37B55, 37B02, 37B05}
\begin{document}

\begin{abstract}
This paper explores the concept of topological transitivity in nonautonomous dynamical systems, which are defined as sequences of continuous maps from a compact metric space to itself.

It investigates various conditions (including intersection of any pair of open sets and existence of a dense orbit) that could be taken as definitions of the topological transitivity of a nonautonomous system, and addresses their relation both in the case of a general compact metric space and in the case where, in addition, the space has no isolated point. 

This provides the necessary basis for further investigation of transitivity of nonautonomous dynamical systems.
\end{abstract}
\maketitle

\section{Introduction}
Let $X$ be a compact metric space, as a \textit{nonautonomous dynamical system} we understand 
a sequence $f_{1,\infty} = f_{1},f_{2},f_{3},\ldots$ of continuous maps from $X$ to $X$. 
For any positive integer $n$ we denote by 
$$
    f_{1,n} = f_{n} \circ f_{n-1} \circ \dots \circ f_{1}
$$
the \textit{$n$-th iteration of $f_{1,\infty}$}, additionally we put $ f_{1,0} = \id$. 

Let $x\in X$, define the \textit{orbit of $x$} putting 
$\Orb_{f_{1,\infty}}(x) = \{x, f_{1,1}(x), f_{1,2}(x),\ldots\}$ and \textit{omega-limits set 
of $x$} as the set of all limit points of the sequence $(f_{1,n} (x))_{n=1}^\infty$, which will 
be denoted by $\omega_{f_{1,n}}(x)$. 

The \textit{n-th preimage $f_{1,n}^{-1}(x)$ of a point $x$} is just preimage under $n$-th iteration
i.e. $(f_{1,n})^{-1}(x)$, similarly we define $f_{1,n}^{-1}(A)$ for a set $A\subset X$.   

If all $f_{n}$ are equal to some $f$ then we get ``classical'' 
\textit{autonomous dynamical system}. In this case we we use $f^{n}$ instead of $f_{1,n}$.

\section{Transitivity of nonautonomous dynamical system}

Topological transitivity of a dynamical system is a key property that shows the ability of a system to transition from any part of space to any other part of space through its dynamics. 
This feature is essential for comprehending the system's 
behavior, stability, and long-term development \cite{Kaw,NT,Xia}. 

Although it is a very important and often studied concept, there is no universally 
accepted definition. For autonomous dynamical system given by continuous 
$f\colon X \rightarrow X$ acting in metric space $X$ the two most commonly 
accepted definitions are TT and DO (see below for definition). 
\begin{itemize}
    \item[(TT)] For every pair of nonempty open sets $U,V\subset X$, there is an $n\in\N$ 
        such that $f^{n}(U) \cap V \neq \emptyset$.
   \item[(DO)] There is an $x \in X$ with a dense orbit.
\end{itemize}

In the case of general metric space these condition are independent. If we consider 
compact metric space then TT implies DO (see \cite{Sil} for details). 
In their famous paper \cite{KS} Kolyada and Snoha presented a list of 15 conditions 
equivalent to TT. If we consider compact metric space without periodic points 
then DO also becomes equivalent to TT \cite{KS}. 

In the presented paper we consider a list of 12 conditions from \cite{KS}, those which 
are clearly reformulable for nonautonomous dynamical systems, and we solve their 
relation both in the case of a compact metric space and in the case when, 
additionally, the space has no isolated point.

From now on, let $f_{1,n}$ be a nonautonomous dynamical system on a metric space $X$.
We consider the following conditions. 
\begin{itemize}
    \item[(TT)] For every pair of nonempty open sets $U,V\subset X$, there is an $n\in\N$ 
        such that $f_{1,n}(U) \cap V \neq \emptyset$.
    \item[(TT$_0$)] For every pair of nonempty open sets $U,V\subset X$, there is an $n\in\N\cup\{0\}$ 
        such that $f_{1,n}(U) \cap V \neq \emptyset$.
    \item[(OP)] For every nonempty open $U\subset X$ the set $\bigcup_{n=1}^{\infty} f_{1,n}(U)$ is dense in $X$.  
    \item[(OP$_{0}$)] For every nonempty open $U\subset X$ the set $\bigcup_{n=0}^{\infty} f_{1,n}(U)$ is dense in $X$.  
    \item[(TT$^{-}$)] For every nonempty open sets $U,V\subset X$, there is an $n\in\N$ 
        such that $f_{1,n}^{-1}(U) \cap V \neq \emptyset$.
    \item[(TT$^{-}_{0}$)] For every nonempty open sets $U,V\subset X$, there is an $n\in\N\cup\{0\}$
        such that $f_{1,n}^{-1}(U) \cap V \neq \emptyset$.
    \item[(OP$^{-}$)] For every nonempty open $U\subset X$ the set $\bigcup_{n=1}^{\infty} f_{1,n}^{-1}(U)$ is dense in $X$.  
    \item[(OP$^{-}_{0}$)] For every nonempty open $U\subset X$ the set $\bigcup_{n=0}^{\infty} f_{1,n}^{-1}(U)$ is dense in $X$.  
    \item[(D$\omega$)] There is a $G_\delta$-dense set $A\subset X$ such that $\omega_{f_{1,\infty}}(x) = X$ for any $x\in A$.
    \item[(DDO)] There is a $G_\delta$-dense set $A\subset X$ such that $\Orb_{f_{1,\infty}}(x)$ is dense  
        for any $x\in A$.
    \item[($\omega$)] There is an $x \in X$ such that $\omega_{f_{1,\infty}}(x) = X$.
    \item[(DO)] There is an $x \in X$ with a dense orbit.
\end{itemize}

\section{Case of compact metric spaces}

In this section, we focus on a non-autonomous dynamical system acting on a general compact metric space $X$. In this setting, we will solve the relations between the above conditions, which could be taken as definitions of transitivity of the non-autonomous system.

A version of the following theorem for autonomous systems can be found, for example, in \cite{dV}.
\begin{thm}[TT$_0$ $\Rightarrow$ DO] \label{TTODO}
    Let $f_{1,\infty}$ be a nonautonomous dynamical system on $X$ possessing TT$_0$\ property 
    then it also has DO. 
\end{thm}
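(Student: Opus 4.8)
The plan is to run the standard Baire-category argument, which transfers almost verbatim from the autonomous setting since every $f_{1,n}$ is continuous (being a finite composition of continuous maps). First I would fix a countable base $\{V_k\}_{k\in\N}$ of nonempty open subsets of $X$; this exists because a compact metric space is second countable. For each $k$ I would set
$$
    W_k \;=\; \bigcup_{n=0}^{\infty} f_{1,n}^{-1}(V_k),
$$
which is an open subset of $X$, being a union of preimages of an open set under continuous maps.

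The next step is to observe that TT$_0$\ says exactly that each $W_k$ is dense. Indeed, given any nonempty open $U\subset X$, the TT$_0$\ property supplies an $n\in\N\cup\{0\}$ with $f_{1,n}(U)\cap V_k\neq\emptyset$, which is equivalent to $U\cap f_{1,n}^{-1}(V_k)\neq\emptyset$, hence $U\cap W_k\neq\emptyset$. So $W_k$ meets every nonempty open set and is therefore dense and open. By the Baire category theorem (a compact metric space is complete), the set $A=\bigcap_{k=1}^{\infty}W_k$ is a dense $G_\delta$, in particular nonempty.

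Finally I would take any $x\in A$ and check that $\Orb_{f_{1,\infty}}(x)$ is dense: for every $k$ there is some $n\in\N\cup\{0\}$ with $x\in f_{1,n}^{-1}(V_k)$, i.e. $f_{1,n}(x)\in V_k$, and since $f_{1,0}=\id$ the point $f_{1,n}(x)$ lies in $\Orb_{f_{1,\infty}}(x)=\{x,f_{1,1}(x),f_{1,2}(x),\ldots\}$. Thus the orbit of $x$ meets every basic open set $V_k$, so it is dense, and DO follows. I do not expect a genuine obstacle here; the only points requiring a little care are that one must allow $n=0$ (so this is really the $0$-indexed version of the property that is needed, matching the hypothesis TT$_0$) and that $f_{1,0}=\id$ is what puts the "start" of the orbit back into play. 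I would also remark that the argument in fact produces a dense $G_\delta$ set of points with dense orbit, i.e. it yields DDO, a formally stronger conclusion than DO.
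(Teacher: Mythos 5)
Your proof is correct. It is worth noting, though, that it is not the argument the paper uses for this theorem: the paper proves TT$_0 \Rightarrow$ DO by contradiction, assuming no point has a dense orbit, covering $X$ by the closed sets $F_n = \{y :\; \Orb_{f_{1,\infty}}(y) \cap V_n = \emptyset\}$, showing each $F_n$ is nowhere dense via TT$_0$, and contradicting the fact that $X$ is of second category. Your sets $W_k$ are exactly the complements of these $F_k$, so the two arguments are dual formulations of the same Baire-category idea, but your direct version is in fact the paper's proof of the later, stronger Theorem~\ref{TT0DDO} (TT$_0 \Leftrightarrow$ DDO): you produce a dense $G_\delta$ of points with dense orbit rather than merely one such point, which is precisely the extra content the paper defers to that theorem. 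What the paper's contradiction argument buys is only that it needs nothing beyond second category of $X$ (no explicit Baire intersection of dense open sets); what your version buys is the stronger DDO conclusion in one pass, at the negligible cost of invoking completeness of the compact metric space. Your two points of care --- allowing $n=0$ and using $f_{1,0}=\id$ so that $x$ itself counts as an orbit point --- are exactly the right ones, and the equivalence $f_{1,n}(U)\cap V_k \neq \emptyset \iff U \cap f_{1,n}^{-1}(V_k)\neq\emptyset$ is used in the paper as well (it underlies the equivalence of TT$_0$ and TT$_0^-$ in Theorem~\ref{MainComp}).
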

\begin{proof}
    Suppose, on the contrary, that the dynamical system does not have the DO property. 
Therefore, for every point $x\in X$ there exists a nonempty open set $O_{x}$ such that 
$\Orb_{f_{1,\infty}}(x) \cap O_{x} = \emptyset$. 
Denote by $(V_{n})_{n=1}^\infty$ a countable basis of $X$. For each $x$ there exist 
an $n_{x}$ for which $V_{n_{x}} \subset O_{x}$. 

For any $n\in \N$ consider set 
$F_{n} = \{ y \in X;\; \Orb_{f_{1,\infty}}(y) \cap V_{n} = \emptyset\}$.

Each of $F_{n}$ is nowhere dense. To see this consider open $W \subset F_{n}$ by TT$_0$ 
$\Orb_{f_{1,\infty}}(W) \cap V_{n} \neq \emptyset$ contrary to the definition of $F_{n}$. 

But every $x \in X$  lies in some $F_{n}$ which gives us $X = \bigcup_{n=1}^\infty F_{n}$, 
 which is in contradiction with the fact that $X$ is second category.
\end{proof}

\begin{exa}[$\omega \not \Rightarrow$ TT$_0$] \label{ExOmega}
There is a nonautonomous dynamical system $f_{1,\infty}$ on compact space 
having $\omega$ but no TT$_{0}$.
\end{exa}
\begin{proof}
Take $X=\{1,2,3\}$ and a sequence of maps $f_{1,\infty}$ on $X$ cyclically 
repeating three maps as shown on the following picture.
\begin{center}
\begin{tikzpicture}
  \draw (0,0) circle [radius=0.05] node[left] {$3$};
  \draw (0,1) circle [radius=0.05] node[left] {$2$};
  \draw (0,2) circle [radius=0.05] node[left] {$1$};
  \draw (1.5,0) circle [radius=0.05] ;
  \draw (1.5,1) circle [radius=0.05] ;
  \draw (1.5,2) circle [radius=0.05] ;
  \draw (3,0) circle [radius=0.05] ;
  \draw (3,1) circle [radius=0.05] ;
  \draw (3,2) circle [radius=0.05] ;
  \draw (4.5,0) circle [radius=0.05] ;
  \draw (4.5,1) circle [radius=0.05] ;
  \draw (4.5,2) circle [radius=0.05] ;
  \draw (6,0) circle [radius=0.05] ;
  \draw (6,1) circle [radius=0.05] ;
  \draw (6,2) circle [radius=0.05] ;
  \draw (7.5,0) circle [radius=0.05] ;
  \draw (7.5,1) circle [radius=0.05] ;
  \draw (7.5,2) circle [radius=0.05] ;
  \draw (9,0) circle [radius=0.05] node[right] {$\ \ldots$};
  \draw (9,1) circle [radius=0.05] node[right] {$\ \ldots$};
  \draw (9,2) circle [radius=0.05] node[right] {$\ \ldots$};
\draw[->] (0.2,2) -- (1.3,0);
\draw[->] (1.7,0) -- (2.8,1);
\draw[->] (3.2,1) -- (4.3,2);
\draw[->] (4.7,2) -- (5.8,0);
\draw[->] (6.2,0) -- (7.3,1);
\draw[->] (7.7,1) -- (8.8,2);
\draw[->,dotted] (0.2,1) -- (1.3,1);
\draw[->,dotted] (1.7,1) -- (2.8,2);
\draw[->,dotted] (3.2,2) -- (4.3,1);
\draw[->,dotted] (4.7,1) -- (5.8,1);
\draw[->,dotted] (6.2,1) -- (7.3,2);
\draw[->,dotted] (7.7,2) -- (8.8,1);
\draw[->,dashed] (0.2,0) -- (1.3,2);
\draw[->,dashed] (1.7,2) -- (2.8,0);
\draw[->,dashed] (3.2,0) -- (4.7,0);
\draw[->,dashed] (4.7,0) -- (5.8,2);
\draw[->,dashed] (6.2,2) -- (7.3,0);
\draw[->,dashed] (7.7,0) -- (8.8,0);
  \draw[->] (0,-0.3) -- (1.3,-0.3) node[below,pos=0.5] {$f_{1}$};
  \draw[->] (1.5,-0.3) -- (2.8,-0.3) node[below,pos=0.5] {$f_{2}$};
  \draw[->] (3,-0.3) -- (4.3,-0.3) node[below,pos=0.5] {$f_{3}$};
  \draw[->] (4.5,-0.3) -- (5.8,-0.3) node[below,pos=0.5] {$f_{4}=f_{1}$};
  \draw[->] (6,-0.3) -- (7.3,-0.3) node[below,pos=0.5] {$f_{5}=f_{2}$};
  \draw[->] (7.5,-0.3) -- (8.8,-0.3) node[below,pos=0.5] {$f_{6}=f_{3}$};
\end{tikzpicture}
\end{center}
For this system we have  $\omega_{f_{1,\infty}}(1) = \{1,2,3\}$. 
But if we take $U=\{2\}$ and $V=\{3\}$ we get that 
$f_{1,n}(U) \cap V = \emptyset$  for all $n=0,1,2.\dots$ and therefore 
$f_{1,\infty}$ has no TT$_0$.
\end{proof}

\begin{exa}[TT $\not\Rightarrow\omega$, TT$_0 \not\Rightarrow \omega$ TT]\label{ExTT}
There are nonautonomous dynamical systems $f_{1,\infty}$ and $g_{1,\infty}$ on compact space $X$ 
such that \\ 
1. $f_{1,\infty}$ has TT but does not have $\omega$.\\
2. $g_{1,\infty}$ has TT$_0$ but does not have TT, and $\omega$.
\end{exa}
\begin{proof}
Take $X=\{0,1\}$ and $f_{1,\infty}$ on $X$ given by the following picture. 
\begin{center}
\begin{tikzpicture}
  \draw (0,0) circle [radius=0.05] node[left] {$1$};
  \draw (0,1) circle [radius=0.05] node[left] {$0$};
  \draw (1.5,0) circle [radius=0.05] ;
  \draw (1.5,1) circle [radius=0.05] ;
  \draw (3,0) circle [radius=0.05] ;
  \draw (3,1) circle [radius=0.05] ;
  \draw (4.5,0) circle [radius=0.05] ;
  \draw (4.5,1) circle [radius=0.05] ;
  \draw (6,0) circle [radius=0.05] node[right] {$\ \ldots$};
  \draw (6,1) circle [radius=0.05] node[right] {$\ \ldots$};
  \draw[->] (0,-0.3) -- (1.3,-0.3) node[below,pos=0.5] {$f_{1}$};
  \draw[->] (1.5,-0.3) -- (2.8,-0.3) node[below,pos=0.5] {$f_{2}=g_{1}$};
  \draw[->] (3,-0.3) -- (4.3,-0.3) node[below,pos=0.5] {$f_{3}=g_{2}$};
  \draw[->] (4.5,-0.3) -- (5.8,-0.3) node[below,pos=0.5] {$f_{4}=g_{3}$};
  \draw[->, dotted] (0.2,1) -- (1.3, 0);
  \draw[->,dotted] (1.7, 0) -- (2.8,1);
  \draw[->,dotted] (3.2,1) -- (4.3,1);
  \draw[->,dotted] (4.7,1) -- (5.8,1);
  \draw[->, dashed] (0.2,0) -- (1.3, 1);
  \draw[->, dashed] (1.7, 1) -- (2.8,0);
  \draw[->, dashed] (3.2,0) -- (4.3,0);
  \draw[->, dashed] (4.7,0) -- (5.8,0);
\end{tikzpicture}
\end{center}
It is easy to see that $f_{1,\infty}$ has TT but there is no $x\in X$ 
with $\omega_{f_{1,\infty}}(x) = X$.

Let us put $g_{n} = f_{n+1}$ for $n = 1,2,\ldots$.
For $U = V = \{0\}$ there is no $n > 0$ such that 
$g_{1,n}(U) \cap V \neq\emptyset$, therefore 
$g_{1,\infty}$ does not have TT. Clearly $g_{1,\infty}$ has TT$_0$.
It is also easy to see that there is no point $x\in X$ with 
$\omega_{g_{1,\infty}}(x) = X$. 
\end{proof}

The proof of the following theorem is an extension and nonautonomous 
adaptation of the proof in \cite{Sil}.

\begin{thm}[TT$_0 \Leftrightarrow$ DDO]\label{TT0DDO}
Let $f_{1,\infty}$ be a nonautonomous dynamical system on 
a compact metric space $X$. Then $f_{1,\infty}$ has TT$_{0}$ 
if and only if it has DDO.
\end{thm}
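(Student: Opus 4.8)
The plan is to prove the two implications separately, with the forward direction TT$_0 \Rightarrow$ DDO carrying essentially all the content via a Baire category argument, closely paralleling the autonomous proof in \cite{Sil}.

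For TT$_0 \Rightarrow$ DDO, I would fix a countable base $(V_k)_{k=1}^{\infty}$ of nonempty open subsets of $X$ and, for each $k$, consider $W_k = \{x\in X;\ \Orb_{f_{1,\infty}}(x)\cap V_k\neq\emptyset\}$. The first step is the identity $W_k = \bigcup_{n=0}^{\infty} f_{1,n}^{-1}(V_k)$, whence $W_k$ is open because each $f_{1,n} = f_n\circ\dots\circ f_1$ is continuous. The second step is that $W_k$ is dense: for an arbitrary nonempty open $U\subset X$, TT$_0$ gives an $n\in\N\cup\{0\}$ with $f_{1,n}(U)\cap V_k\neq\emptyset$, i.e.\ a point $x\in U$ with $f_{1,n}(x)\in V_k$, so $x\in U\cap W_k$. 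The third step is to put $A=\bigcap_{k=1}^{\infty} W_k$ and invoke the Baire category theorem (a compact metric space is complete, hence Baire) to get that $A$ is a dense $G_\delta$; and since $(V_k)$ is a base, each $x\in A$ has $\Orb_{f_{1,\infty}}(x)$ meeting every basic open set, hence dense. This is exactly DDO.

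For the reverse implication DDO $\Rightarrow$ TT$_0$ the argument is short. Given nonempty open $U,V\subset X$ and the $G_\delta$-dense witnessing set $A$, density of $A$ lets me pick $x\in A\cap U$; density of $\Orb_{f_{1,\infty}}(x)$ forces $f_{1,n}(x)\in V$ for some $n\in\N\cup\{0\}$; then $f_{1,n}(x)\in f_{1,n}(U)\cap V$, so this intersection is nonempty. Here I would point out that the case $n=0$ (with $f_{1,0}=\id$) is permitted, which is precisely why the conclusion is TT$_0$ rather than TT.

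I do not expect a genuine obstacle: the whole proof is a routine category argument. The one point requiring mild care is the verification that each $W_k$ is both open and dense --- openness being where continuity of the compositions $f_{1,n}$ is used, and density being the only place TT$_0$ is invoked. A secondary point worth stating explicitly is that the ``$G_\delta$-dense'' clause of DDO is delivered automatically by the Baire category theorem, with no hypothesis on isolated points needed.
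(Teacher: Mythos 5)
Your proposal is correct and follows essentially the same route as the paper: a Baire category argument with the open dense sets $\bigcup_{n=0}^{\infty} f_{1,n}^{-1}(V_k)$ over a countable base, followed by the short density argument for the converse. The only cosmetic difference is that your density step invokes TT$_0$ directly (via a point $x\in U$ with $f_{1,n}(x)\in V_k$), whereas the paper phrases it through the preimage formulation; these are equivalent, so there is nothing to fix.
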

\begin{proof}
Denote by $(B_{i})_{i=1}^\infty$ a countable basis in $X$, we may assume that 
$B_{i}\neq\emptyset$, for each i. Let us put 
$$
U_{i} = \bigcup_{n=0}^\infty f_{1,n}^{-1} (B_{i}). 
$$
Obviously each $U_{i}$ is open and nonempty (every $f_{1,n}$ is continuous). 
To show that it is a dense set consider $V \subset X$ open. We have $B_{i} \subset U_{i}$ 
because $f_{1,0} = \id_{X}$. Then $f_{1,n} (B_{i}) \subset U_{i}$, for any $n\in\N$.  
But by TT$_{0}$ for some $n \in \N\cup\{0\}$ the 
set $f_{1,n}^{-1}(B_{i})$ must intersect $V$. Therefore $U_{i}$ is dense.

Since $X$ is a Baire space and all $U_{i}$ are dense and open, 
$$
    D = \bigcap_{i=1}^\infty U_{i}
$$
is dense $G_\delta$. We show that every $x \in D$ has a dense orbit.
Let $W\subset X$ be an open set, there is a member of basis $B_{i} \subset W$. 
Because $x\in U_{i}$ there is an $n\in \N\cup\{0\}$ such that 
$f_{1,n}(x) \in B_{i} \subset W$. Therefore $\Orb_{f_{1,\infty}}(x)$ is dense.

Now suppose that there is a dense set $D$ whose every element has a dense orbit.
For a given pair $U,V$ of nonempty open set there is an $x \in\ U \cap D$. Since $x$ has 
a dense orbit there is an $n\in \N\cup\{0\}$ such that $f_{1,n}(x) \in V$ and 
we get $f_{1,n}(U) \cap V \neq \emptyset$.
\end{proof}

The following theorem summarizes the results obtained above and is 
the main result for the case of nonautonomous system on general compact metric space. 

\begin{thm}\label{MainComp}
Let $f_{1,\infty}$ be a nonautonomous system on a compact metric space $X$. 
Then the following condition 
\begin{itemize}
        \item[(D$\omega$)] There is a $G_\delta$-dense set $A\subset X$ such that $\omega_{f_{1,\infty}}(x) = X$ for any $x\in A$.
\end{itemize}
%
implies the following four equivalent conditions but converse implications does not hold
\begin{itemize}
    \item[(TT)] For every pair of nonempty open sets $U,V\subset X$, there is an $n\in\N$ 
        such that $f_{1,n}(U) \cap V \neq \emptyset$.
    \item[(TT$^{-}$)] For every nonempty open sets $U,V\subset X$, there is an $n\in\N$ 
        such that $f_{1,n}^{-1}(U) \cap V \neq \emptyset$.
    \item[(OP)] For every nonempty open $U\subset X$ the set $\bigcup_{n=1}^{\infty} f_{1,n}(U)$ is dense in $X$.  
    \item[(OP$^{-}$)] For every nonempty open $U\subset X$ the set $\bigcup_{n=1}^{\infty} f_{1,n}^{-1}(U)$ is dense in $X$.  
\end{itemize}
%
Each of them implies the following five equivalent conditions but converse implications does not hold
\begin{itemize}
    \item[(TT$_0$)] For every pair of nonempty open sets $U,V\subset X$, there is an $n\in\N\cup\{0\}$ 
        such that $f_{1,n}(U) \cap V \neq \emptyset$.
    \item[(TT$^{-}_{0}$)] For every nonempty open sets $U,V\subset X$, there is an $n\in\N\cup\{0\}$
        such that $f_{1,n}^{-1}(U) \cap V \neq \emptyset$.
    \item[(OP$_{0}$)] For every nonempty open $U\subset X$ the set $\bigcup_{n=0}^{\infty} f_{1,n}(U)$ is dense in $X$.  
    \item[(OP$^{-}_{0}$)] For every nonempty open $U\subset X$ the set $\bigcup_{n=0}^{\infty} f_{1,n}^{-1}(U)$ is dense in $X$.  
    \item[(DDO)] There is a $G_\delta$-dense set $A\subset X$ such that $\Orb_{f_{1,\infty}}(x)$ is dense  
        for any $x\in A$.
\end{itemize}
%
Each of them implies the following condition but converse implications does not hold
\begin{itemize}
    \item[(DO)] There is an $x \in X$ with a dense orbit.
\end{itemize}

\smallskip
Additionally, the condition D$\omega$ implies 
\begin{itemize}
    \item[($\omega$)] There is an $x \in X$ such that $\omega_{f_{1,\infty}}(x) = X$.
\end{itemize}
and this implies condition DO. But no other implication between $\omega$ and the properties above holds.
\end{thm}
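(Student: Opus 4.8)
The plan is to read Theorem~\ref{MainComp} as a bookkeeping statement and assemble it from three kinds of ingredient: a few elementary set-theoretic identities, the two substantive facts already proved (Theorems~\ref{TTODO} and~\ref{TT0DDO}), and the three counterexamples (Examples~\ref{ExOmega}--\ref{ExTT}).

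First I would settle the equivalences inside the two blocks. The workhorse is the identity that for every $n\in\N\cup\{0\}$ and all $A,B\subseteq X$ one has $f_{1,n}(A)\cap B\neq\emptyset$ if and only if $A\cap f_{1,n}^{-1}(B)\neq\emptyset$; reading it with the roles of the two open sets exchanged gives TT $\Leftrightarrow$ TT$^{-}$ and TT$_0$ $\Leftrightarrow$ TT$^{-}_{0}$. Since a subset of $X$ is dense precisely when it meets every nonempty open set, each ``OP''-type condition is a verbatim restatement of its ``TT''-type counterpart, which gives TT $\Leftrightarrow$ OP, TT$^{-}$ $\Leftrightarrow$ OP$^{-}$, TT$_0$ $\Leftrightarrow$ OP$_{0}$ and TT$^{-}_{0}$ $\Leftrightarrow$ OP$^{-}_{0}$; and TT$_0$ $\Leftrightarrow$ DDO is exactly Theorem~\ref{TT0DDO}. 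This pins down the first block (TT, TT$^{-}$, OP, OP$^{-}$) and the second block (TT$_0$, TT$^{-}_{0}$, OP$_{0}$, OP$^{-}_{0}$, DDO).

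Next I would run the chain of forward implications. TT $\Rightarrow$ TT$_0$ is immediate from $\N\subseteq\N\cup\{0\}$, so the first block implies the second; TT$_0$ $\Rightarrow$ DO is Theorem~\ref{TTODO} (alternatively, a $G_\delta$-dense subset of the Baire space $X$ is nonempty, so DDO $\Rightarrow$ DO at once). For D$\omega$ $\Rightarrow$ TT, given nonempty open $U,V$ I choose $x$ in the dense set $A$ with $x\in U$; since $\omega_{f_{1,\infty}}(x)=X$, the sequence $(f_{1,n}(x))_{n=1}^{\infty}$ visits $V$ for infinitely many $n$, so $f_{1,n}(U)\cap V\neq\emptyset$ for some $n\in\N$. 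Finally D$\omega$ $\Rightarrow$ $\omega$ is trivial, and $\omega$ $\Rightarrow$ DO holds because $\overline{\Orb_{f_{1,\infty}}(x)}\supseteq\omega_{f_{1,\infty}}(x)=X$.

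What is left is sharpness, and this is the part I would handle with the most care. The guiding remark is that every condition occurring in the two blocks sits, in logical strength, between TT and TT$_0$, while D$\omega$ lies above TT and DO below TT$_0$; consequently all the required non-implications reduce to three witnesses. Example~\ref{ExTT}(2) has TT$_0$ but not TT, so the second block does not imply the first. Example~\ref{ExOmega} has $\omega$, hence DO, but not TT$_0$; since every condition in the two blocks, as well as D$\omega$, implies TT$_0$, this rules out DO $\Rightarrow$ (anything in the two blocks or D$\omega$) and likewise $\omega$ $\Rightarrow$ (anything among those conditions) — so $\omega$ implies none of the eleven other conditions except DO. Example~\ref{ExTT}(1) has TT, hence every condition of the two blocks and DO, but not $\omega$; as D$\omega$ $\Rightarrow$ $\omega$, this rules out the first block $\Rightarrow$ D$\omega$ and, more generally, (anything among those eleven conditions except D$\omega$) $\Rightarrow$ $\omega$. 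I expect the only real friction to be verifying that these three examples genuinely exhaust every non-implication the theorem asserts; the ``betweenness'' bookkeeping just described is the safe way to confirm it, and beyond that I anticipate no deeper obstacle, because all of the analytic content was already spent in Theorems~\ref{TTODO} and~\ref{TT0DDO}.
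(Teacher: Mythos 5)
Your proposal is correct and follows essentially the same route as the paper: the block equivalences via the identity $f_{1,n}(A)\cap B\neq\emptyset \Leftrightarrow A\cap f_{1,n}^{-1}(B)\neq\emptyset$ and the density reformulation, Theorem~\ref{TT0DDO} for DDO, Theorem~\ref{TTODO} for DO, the same direct argument for D$\omega\Rightarrow$TT, and Examples~\ref{ExOmega} and~\ref{ExTT} for sharpness. The only (harmless, arguably cleaner) deviation is that you derive DO~$\not\Rightarrow$~TT$_0$ and DO~$\not\Rightarrow\omega$ from these same examples, whereas the paper cites the known autonomous-case counterexamples.
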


Situation from Theorem~\ref{MainComp} can be described by the following scheme.
\begin{center}
\begin{tikzpicture}
    \draw(2,0) node {DO};
    \draw(0,1) node {TT$_0^-$, TT$_0$, OP$_0^-$, OP$_0$, DDO};
    \draw[->](1.9,2.8) -- (0,2.3);
    \draw(0,2) node {TT$^-$, TT, OP$^-$, OP};
    \draw[->](0,1.8) -- (0,1.3);
    \draw(2,3) node {D$\omega$};
    \draw[->](0,0.8) -- (1.6,0.1);
    \draw(4,1.5) node {$\omega$};
    \draw[->](3.9,1.3) -- (2.4,0.1);
    \draw[->] (2.1,2.8) -- (3.9,1.8);   
\end{tikzpicture}\\
General compact metric space
\end{center}

\begin{proof}
The equivalence of of TT, TT$^-$, OP and OP$^-$ follows from their definitions 
as well as the equivalence of TT$_0$, TT$_0^-$, OP$_0$ and OP$_0^-$. 

Equivalence of TT$_0$ and DDO holds by Theorem~\ref{TT0DDO}

Implication D$\omega \Rightarrow \omega$ and TT $\Rightarrow$ TT$_0$ hold trivially. 
Implication TT$_0 \Rightarrow $ DO follows by Theorem~\ref{TTODO}. 

Each point $x$ whose $\omega_{f_{1,\infty}}(x) = X$ must have a dense orbit and therefore $\omega \Rightarrow $~DO.
 
Now suppose D$\omega$. For arbitrary nonempty open $U,V$ there is an $x\in U$ with $\omega_{f_{1,\infty}}(x) = X$. 
Hence there is an $n \in \N$ such that $f_{1,n}(U) \cap V \neq\emptyset$. Thus we have TT.

Since DO does not imply $\omega$ or TT$_0$ in the case of autonomous system these 
implications does not hold in nonautonomous case. 

Example~\ref{ExOmega} shows that $\omega$ does not imply TT$_0$ and consequently TT and D$\omega$. 

On the other hand $g_{1,\infty}$ of Example~\ref{ExTT} shows that TT$_0$ does not imply TT and $\omega$ consequently also 
D$\omega$. The $f_{1,\infty}$ of the same example shows that also  TT does not imply $\omega$ and D$\omega$.
\end{proof}

\section{Spaces without an isolated point}

In the remainder, we assume that the compact metric space $X$ has no isolated point.

\begin{thm}[DDO $\Rightarrow$ D$\omega$]\label{DDODomega}
Let $f_{1,\infty}$ be a nonautonomous dynamical system on $X$ without an isolated 
point. Then DDO implies D$\omega$.
\end{thm}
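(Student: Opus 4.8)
The plan is to show that the very $G_\delta$-dense set supplied by DDO already witnesses D$\omega$; in other words, on a space without isolated point a point with a dense orbit automatically satisfies $\omega_{f_{1,\infty}}(x)=X$, so no new set needs to be constructed.

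First I would isolate the only topological input: in a metric space with no isolated point every nonempty open set is infinite. Indeed, a nonempty finite open set would, since singletons are closed, contain a one-point open subset, i.e.\ an isolated point, a contradiction. Consequently, deleting finitely many points from a nonempty open set leaves a nonempty open set (open since finite sets are closed, nonempty by the previous sentence).

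Next, fix $x$ with $\overline{\Orb_{f_{1,\infty}}(x)}=X$, fix $y\in X$, and build a strictly increasing sequence of indices $n_1<n_2<\cdots$ with $f_{1,n_k}(x)\in B(y,1/k)$. Proceeding inductively with $n_0=0$: given $n_{k-1}$, the set $B(y,1/k)\setminus\{f_{1,j}(x):0\le j\le n_{k-1}\}$ is open and nonempty by the remark above, so by density of $\Orb_{f_{1,\infty}}(x)$ there is some $n$ with $f_{1,n}(x)$ in it; necessarily $n>n_{k-1}$, since every orbit point of index at most $n_{k-1}$ has been removed. Put $n_k=n$. Then $n_k\to\infty$ and $d(f_{1,n_k}(x),y)<1/k\to 0$, so $y\in\omega_{f_{1,\infty}}(x)$; as $y\in X$ was arbitrary, $\omega_{f_{1,\infty}}(x)=X$.

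Finally, applying this to every point of the $G_\delta$-dense set $A$ furnished by DDO yields D$\omega$ with the same set $A$. The one delicate point — and the only place the hypothesis on $X$ is used — is the guarantee in the inductive step that the dense orbit meets the punctured ball at an arbitrarily large index; everything else is bookkeeping. (Incidentally, compactness of $X$ plays no role in this particular implication.)
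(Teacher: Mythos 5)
Your proof is correct and takes essentially the same route as the paper: both reduce DDO $\Rightarrow$ D$\omega$ to the claim that, in a space without isolated points, any point with a dense orbit already satisfies $\omega_{f_{1,\infty}}(x)=X$, and then reuse the $G_\delta$-dense set furnished by DDO. The only difference is the device forcing the orbit indices to infinity: you puncture shrinking balls around $y$ by the finitely many earlier orbit points, whereas the paper uses pairwise disjoint balls around a sequence of distinct points converging to $y$; both work, and yours is, if anything, slightly more explicit on that delicate point.
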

\begin{proof}
Follows from the fact that in the compact space $X$ without an isolated point, 
any point $x\in X$ with a dense orbit has $\omega_{f_{1,\infty}}(x) = X$. 
To see this, for any $y \in X$  consider a sequence $(y_{n})_{n=1}^\infty$ 
of different points and also different from $y$ converging to $y$. 
Let $B_{n}$ be system of pairwise disjoint open balls with center at $y_{n}$ 
and radius smaller than $1/n$. 
Since $x$ has a dense orbit, for any $k\in\N$ there exist $n_{k} \in \N$ such 
that $x_{k} = f_{1,n_{k}}(x) \in B_{k}$. It is clear that $x_{k}$ converge to $y$.
\end{proof}

Many results concerning nonautonomous systems must add additional assumptions on 
the sequence $(f_{n})_{n=1}^\infty$ to be  valid. The most common such assumptions 
include uniform convergence and surjectivity of the mappings $f_{n}$. In the 
following example, which disproves the equivalence of TT and DO, we show that 
even by adding the mentioned assumptions DO does not imply TT.

\begin{exa}[DO $\not\Rightarrow$ TT]\label{ExTent}
There is a uniformly converging nonautonomous dynamical system of surjective 
maps on $[0,1]$ which is DO but not TT.
\end{exa}
\begin{proof}
Take $X=[0,1]$ and put
$$
f_{1}(x) = \left\{ 
\begin{array}{rl}
    0, & x \leq \frac{1}{4}, \\[3pt]
     4x - 1,& \frac14 < x \leq \frac12, \\[3pt]
     2 - 2x, & x > \frac12,
\end{array}
\right.
\hspace{7mm}
\begin{array}{c}
f_{n}(x) = T(x) =  \left\{ 
\begin{array}{rl}
     2x, & x \leq \frac{1}{2}, \\[3pt]
     2 - 2x, & x > \frac{1}{2},
\end{array}\right. \\[15pt]
n=2,3,\ldots
\end{array}
$$

\begin{center}
\begin{tikzpicture}[scale=3]
    \draw[thin] (0,0) rectangle (1,1);
    \draw[dotted] (0.5,0) -- (0.5,1);
    \draw[thick] (0,0) -- (0.25, 0) -- (0.5,1) -- (1,0); 
    \draw (0.78,0.6) node[above] {$f_1$};
    \draw (0,0) node[left] {$0$};
    \draw (0,1) node[left] {$1$};
    \draw (0,0) node[below] {$0$};
    \draw (1,0) node[below] {$1$};
    \draw (0.5,0) node[below] {$\frac12$};
    \draw (0.25,0) node[below] {$\frac14$};
\end{tikzpicture} \hspace{10mm}
\begin{tikzpicture}[scale=3]
    \draw[thin] (0,0) rectangle (1,1);
    \draw[dotted] (0.5,0) -- (0.5,1);
    \draw[thick] (0,0) -- (0.5,1) -- (1,0);
    \draw (0.78,0.6) node[above] {$f_n$};
    \draw (0,0) node[left] {$0$};
    \draw (0,1) node[left] {$1$};
    \draw (0,0) node[below] {$0$};
    \draw (1,0) node[below] {$1$};
    \draw (0.5,0) node[below] {$\frac12$};
\end{tikzpicture}
\end{center}

It is well know that tent-map $T(x)$ has a point $x_2 \in [0,1]$ with dense 
orbit $\{x_{2}, T(x_{2}), T^{2}(x_{2}),\ldots\} = \{x_{2}, f_{2}(x_{2}), f_{3}(f_{2}(x_{2})),\ldots\}$. 
Since $f_{1}$ is surjective there is $x_{1}$ for which $f_{1}(x_{1}) = x_{2}$ and therefore 
$\Orb_{f_{1,\infty}}(x_{1})$ is dense.

Now,  take  $U = (0,\frac14)$, then $U$ is mapped by $f_1$ to singleton $\{0\}$ and 
$0$ is a fixed point of all $f_{2},f_{3},\ldots$, it is clear that 
$f_{1,\infty}$ is not TT.
\end{proof}

The following theorem is the main result in this section. 

\begin{thm}\label{MainNoIso}
Let $f_{1,\infty}$ be a nonautonomous system on a compact metric space $X$ without 
an isolated point. 
Then the following ten conditions are equivalent.
\begin{itemize}
\item[(TT)] For every pair of nonempty open sets $U,V\subset X$, there is an $n\in\N$ 
        such that $f_{1,n}(U) \cap V \neq \emptyset$.
\item[(TT$_0$)] For every pair of nonempty open sets $U,V\subset X$, there is an $n\in\N\cup\{0\}$ 
        such that $f_{1,n}(U) \cap V \neq \emptyset$.
\item[(OP)] For every nonempty open $U\subset X$ the set $\bigcup_{n=1}^{\infty} f_{1,n}(U)$ is dense in $X$.
\item[(OP$_{0}$)] For every nonempty open $U\subset X$ the set $\bigcup_{n=0}^{\infty} f_{1,n}(U)$ is dense in $X$.
\item[(TT$^{-}$)] For every nonempty open sets $U,V\subset X$, there is an $n\in\N$ 
        such that $f_{1,n}^{-1}(U) \cap V \neq \emptyset$.
\item[(TT$_{0}^{-}$)] For every nonempty open sets $U,V\subset X$, there is an $n\in\N\cup\{0\}$
        such that $f_{1,n}^{-1}(U) \cap V \neq \emptyset$.
\item[(OP$^{-}$)] For every nonempty open $U\subset X$ the set $\bigcup_{n=1}^{\infty} f_{1,n}^{-1}(U)$ is dense in $X$.
\item[(OP$_{0}^{-}$)] For every nonempty open $U\subset X$ the set $\bigcup_{n=0}^{\infty} f_{1,n}^{-1}(U)$ is dense in $X$.
\item[(DDO)] There is a $G_\delta$-dense set $A\subset X$ such that $\Orb_{f_{1,\infty}}(x)$ is dense  
        for any $x\in A$.
\item[(D$\omega$)] There is a $G_\delta$-dense set $A\subset X$ such that $\omega_{f_{1,\infty}}(x) = X$ 
        for any $x\in A$.
\end{itemize}
 Each of them implies the following pair of equivalent conditions but converse implications does not hold
\begin{itemize}
\item[($\omega$)] There is an $x \in X$ such that $\omega_{f_{1,\infty}}(x) = X$.
\item[(DO)] There is an $x \in X$ with a dense orbit.
\end{itemize}
\end{thm}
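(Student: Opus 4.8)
The plan is to obtain Theorem~\ref{MainNoIso} by combining what is already known on a general compact metric space (Theorem~\ref{MainComp}) with the single extra ingredient that the absence of isolated points supplies, namely Theorem~\ref{DDODomega}. First I would recall from Theorem~\ref{MainComp} the implications valid on an arbitrary compact metric space: the four conditions TT, TT$^-$, OP, OP$^-$ are mutually equivalent; the five conditions TT$_0$, TT$_0^-$, OP$_0$, OP$_0^-$, DDO are mutually equivalent; one has D$\omega \Rightarrow$ TT, TT $\Rightarrow$ TT$_0$, and TT$_0 \Rightarrow$ DO; and, separately, D$\omega \Rightarrow \omega \Rightarrow$ DO. Thus over a general compact space the ten conditions of the theorem already collapse into at most three levels: D$\omega$ on top, the TT-group in the middle, and the TT$_0$-group (which contains DDO) at the bottom.

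Next I would close this chain into one equivalence class using Theorem~\ref{DDODomega}, which gives DDO $\Rightarrow$ D$\omega$ exactly when $X$ has no isolated point. Since DDO sits in the lowest level and D$\omega$ on top, this implication forces the three levels to coincide, so all ten listed conditions become equivalent. If an explicit cycle is wanted, it reads: DDO $\Rightarrow$ D$\omega$ by Theorem~\ref{DDODomega}, D$\omega \Rightarrow$ TT and TT $\Rightarrow$ TT$_0$ by Theorem~\ref{MainComp}, and TT$_0 \Leftrightarrow$ DDO by Theorem~\ref{TT0DDO}, the remaining equivalences inside each level being those already recorded in Theorem~\ref{MainComp}.

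It then remains to handle the pair $\omega$, DO. From Theorem~\ref{MainComp} each of the ten conditions implies DO, and D$\omega \Rightarrow \omega$, so each of the ten also implies $\omega$. The equivalence $\omega \Leftrightarrow$ DO on a space without isolated points I would establish by noting that $\omega \Rightarrow$ DO is immediate, while for DO $\Rightarrow \omega$ one reuses the argument inside the proof of Theorem~\ref{DDODomega}: if $x$ has a dense orbit then, given any $y\in X$, pick distinct points tending to $y$ and pairwise disjoint balls of shrinking radius around them; density of the orbit puts iterates of $x$ into each ball, so $y\in\omega_{f_{1,\infty}}(x)$ and hence $\omega_{f_{1,\infty}}(x)=X$. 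Finally, to see that the converse implications fail, i.e.\ that the pair $\{\omega,\mathrm{DO}\}$ does not imply the ten equivalent conditions, I would invoke Example~\ref{ExTent}: the interval $[0,1]$ has no isolated point, the system constructed there is DO (hence, by the equivalence just proved, also has $\omega$), yet it is not TT.

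The main difficulty I anticipate is bookkeeping rather than a new idea: one must keep track of precisely which implications used above require the no-isolated-point hypothesis (only DDO $\Rightarrow$ D$\omega$ and DO $\Rightarrow \omega$ do), and one must check that the counterexample genuinely carries both $\omega$ and DO while violating TT. No step demands an argument beyond those already developed in the excerpt.
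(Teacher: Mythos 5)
Your proposal is correct and follows essentially the same route as the paper: it combines Theorem~\ref{MainComp} with Theorem~\ref{DDODomega} to close the implication cycle among the ten conditions, reuses the argument from the proof of Theorem~\ref{DDODomega} to get DO $\Rightarrow \omega$, and invokes Example~\ref{ExTent} to show the converse implications fail. Your write-up just makes the cycle and the bookkeeping of where the no-isolated-point hypothesis is used more explicit than the paper does.
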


Situation from Theorem~\ref{MainNoIso} can be described by the following scheme. 
\begin{center}
\begin{tikzpicture}
\draw(0,1) node {TT$^-$,TT$_0^-$, TT, TT$_0$, OP$^-$, OP$_0^-$, OP, OP$_{0}$, DDO, D$\omega$};
\draw[->] (0,0.8) -- (0,0.3);
\draw (0,0) node {DO, $\omega$};
\end{tikzpicture}\\
Compact metric space without an isolated point
\end{center}

\begin{proof}
In view of more general case in Theorem~\ref{MainComp} the Theorem~\ref{DDODomega} concludes the 
equivalence of the ten properties

The implication from DO to $\omega$ follows again from the same argument as in the proof 
of Theorem~\ref{DDODomega}.

Finally, Example~\ref{ExTent} shows that DO does not imply TT.
\end{proof}

\section*{Acknowledgments}
The research was supported by RVO funding for I\v{C}47813059.

\end{document}